\documentclass[10pt]{article}

\usepackage{amsmath,amsfonts,amssymb,amsthm,graphicx}
\usepackage{enumerate}
\usepackage{etoolbox}
\usepackage{color}
\usepackage[colorlinks=true,citecolor=blue,pdfpagemode=UseNone,pdfstartview=FitH]{hyperref}

\usepackage[T2A]{fontenc}
\newenvironment{cyr}{}{}

\allowdisplaybreaks[4]

\newcommand{\noop}[1]{}

\newcommand{\R}{\mathbb{R}}    % real numbers
\newcommand{\N}{\mathbb{N}}    % natural numbers

\newcommand{\st}{:} % "such that"

\newcommand{\CCC}{\mathcal{C}}   % collection of sets
\newcommand{\UUU}{\mathfrak{U}}  % free ultrafilter

\theoremstyle{plain}
\newtheorem{theorem}{Theorem}

\newtheorem{proposition}[theorem]{Proposition}
\theoremstyle{definition}

\theoremstyle{remark}
\newtheorem{remark}[theorem]{Remark}

\newcommand{\abstr}%
  {Kolmogorov's \emph{Grundbegriffe der Wahrscheinlichkeitsrechnung}
  introduced the standard measure-theoretic formalization of probability
  but included only a two-page section
  about the relation of the mathematical theory of probability
  to the world of experience.
  This note discusses this brief non-mathematical section
  concentrating on what I regard
  as its two most controversial features:
  having two bridges between the mathematical theory and the world of experience
  instead of the standard one,
  and the possibility of observing a prespecified event of probability zero
  in a sequence of trials.}

\begin{document}

\title{The non-mathematical section of Kolmogorov's \emph{Grundbegriffe}}
\author{Vladimir Vovk}
\maketitle
\begin{abstract}
  \smallskip
  \abstr

  The version of this paper at \url{http://gtfp.net} (Working Paper 69)
  is updated most often.
\end{abstract}

\section{Introduction}

Kolmogorov's \emph{Grundbegriffe der Wahrscheinlichkeitsrechnung},
first published in German in 1933 and introducing
the standard measure-theoretic framework for probability,
is often praised for its philosophical neutrality
(see, e.g., \cite{Hajek:2023}),
but it also includes a two-page section about the philosophy of probability.
The title of this section, \cite[Sect.~I.2]{Kolmogoroff:1933},
is ``Das Verh\"altnis zur Erfahrungswelt'',
and Kolmogorov's footnote to this title
makes it clear that it is a variation on von Mises's section title
``Das Verh\"altnis der Theorie zur Erfahrungswelt''
in his book \cite{vonMises:1931}.
In this note I will refer to Kolmogorov's section
as the \emph{Verh\"altnis}.

The main topic of this note is unusual features of the \emph{Verh\"altnis}:
having two bridges connecting measure-theoretic probability with reality,
which Kolmogorov calls Principle~A and Principle~B,
and the possibility of observing a prespecified event of probability zero
\cite[Sect.~I.2, Remark~II]{Kolmogoroff:1933}.
Both unusual features are related to the possibility of combining
several experiments satisfying Kolmogorov's conditions, or some of the conditions,
into one that still satisfies them.
This will be covered in Sects.~\ref{sec:why-two} and~\ref{sec:Remark-II}.
The main finding of Sect.~\ref{sec:Remark-II}
is that Remark~II in \cite[Sect.~I.2]{Kolmogoroff:1933}
is consistent with Kolmogorov's Axioms I--V and inconsistent with Axiom~VI
(that of continuity,
equivalent to $\sigma$-additivity in the presence of the other axioms).

\section{Some history of the \emph{Grundbegriffe} and the \emph{Verh\"altnis}}

An important source of information about the \emph{Grundbegriffe}
is the volume \cite{Shiryaev:2003b} of correspondence
between Kolmogorov and Aleksandrov
(the title of the volume is a line from Kolmogorov's poem
devoted to Aleksandrov \cite[preface, p.~14]{Shiryaev:2003a}).
Kolmogorov started writing it on 26 October 1932
(letter to Aleksandrov of this date, \cite{Shiryaev:2003b})
and planned to work on it until January 1933 (letter of 8 November).
The preface is dated ``Easter 1933'' (which was 16 April 1933,
both in Russia and in the West).

The \emph{Grundbegriffe} was translated into Russian in 1936
by Kolmogorov's student Grigory Bavli
and then updated in 1974 by his other student, Albert Shiryaev.
The second Russian edition modernized Kolmogorov's notation,
and Shiryaev also extended the last three sections
\cite[Sects.~VI.3--VI.5]{Kolmogoroff:1933} by adding proofs
(following Kolmogorov's contemporary papers).
It seems very likely that Kolmogorov saw and approved the Russian translations.
The third Russian edition appeared in 1998,
after Kolmogorov's death in 1987;
it seems identical to the second edition of 1974
apart from a new one-page preface by Prokhorov and Shiryaev
and Shiryaev's essay on the history of probability
(later translated into English as \cite{Shiryaev:2019-local}).

An English translation of the \emph{Grundbegriffe} was published only in 1950,
although in his preface the translator, Nathan Morrison,
mentions an earlier English translation.
Morrison's translation used both the German original and the first Russian translation.
Another English translation of the \emph{Verh\"altnis}
appeared as \cite[Sect.~5.2.1]{Shafer/Vovk:2006SS};
I will use it when quoting (directly and indirectly) from the \emph{Verh\"altnis}
and will use Morrison's translation when quoting from the rest of the \emph{Grundbegriffe}.

One difference between the three Russian editions and the German original
is that the former merge the latter's Axioms~I and~II
into one axiom, Axiom~I.
The numbering of axioms used in this note is the one in the German original
(and the English translation);
there are five axioms (I--V) preceding the \emph{Verh\"altnis}
and one axiom (Axiom~VI of continuity) following it.

The earliest known draft of Chapter~I of the \emph{Grundbegriffe},
including the \emph{Verh\"altnis},
was prepared by Kolmogorov for his lectures in Dnipro.%
\footnote{%
  Dnipro was, and still is, Ukraine's fourth largest city
  (after Kharkiv, which was the capital at the time, Kyiv, and Odesa).
  It has been known under many names,
  including Katerynoslav, Dnipropetrovsk,
  and their Russian variations
  (Yekaterynoslav and Dnepropetrovsk).
  Kolmogorov and Aleksandrov refer to it as Dnepropetrovsk,
  but even in the Soviet times the city was widely known as Dnipro
  (or Dnepr in Russian).
  Since 2016 Dnipro is the official name.}
The \emph{Verh\"altnis} was copied from the lecture notes
(\begin{cyr}конспект\end{cyr})
of Kolmogorov's Dnipro lectures and greatly expanded,
according to Kolmogorov's plan
that he shared with Aleksandrov in the same letter
of 26 October 1932 \cite{Shiryaev:2003b}.
The whole of Chapter~I was to be largely identical
(\begin{cyr}довольно точно будет совпадать\end{cyr})
to the Dnipro lecture notes.
Kolmogorov had been visiting Dnipro twice a year
since Autumn 1931
\cite[p.~43]{Nikolsky:1983}, in Spring and Autumn,
and this continued, perhaps less regularly, until 1940
\cite[p.~113]{Nikolsky:2000}.
His last visit before starting the \emph{Grundbegriffe}
ended on 26 September 1932
(his letter to Aleksandrov of 24 September announces his departure),
and he visited it again during the \emph{Grundbegriffe} production,
lecturing on 4--10 March 1933 mainly on probability theory
(letter of 4 March).

\section{Summary of the \emph{Verh\"altnis}}

When discussing the \emph{Verh\"altnis},
I will use the notation used in the second and third Russian editions
(and the English translation in \cite[Sect.~5.2.1]{Shafer/Vovk:2006SS}).

The \emph{Verh\"altnis} consists of three parts:
a general scheme of the applications of probability,
including Principles~A and~B;
a frequentist derivation of the measure-theoretic axioms
introduced by this point (not including the axiom of continuity);
Remarks~I and~II.

In describing the way the theory of probability is applied to the real world,
Kolmogorov talks about a complex of conditions $\mathfrak{S}$
which allows an unlimited (unbeschr\"ankt, \begin{cyr}неограниченный\end{cyr}) number of repetitions.
The theory of probability is applicable when we may assume
that each event $A$ that does or does not occur under conditions $\mathfrak{S}$
is assigned a number $P(A)\in[0,1]$ with the following properties:
\begin{description}
\item[Principle A]
  One can be practically certain that,
  if $\mathfrak{S}$ is repeated a large number of times,
  $n$,
  and the event $A$ occurs $m$ times,
  then $m/n$ will differ only slightly from $P(A)$.
\item[Principle B]
  If $P(A)$ is very small,
  then one can be practically certain
  that the event $A$ will not occur
  on a single realization of $\mathfrak{S}$.
\end{description}
What is implicit in Principles~A and~B
is that the event $A$ should be prespecified,
i.e., chosen prior to the experiment.

\begin{remark}\label{rem:infinity}
  The title of Chapter~I
  (of which the \emph{Verh\"altnis} forms a part)
  is ``Elementary theory of probability'',
  and at the beginning of the chapter
  Kolmogorov explains that elementary theory of probability
  is defined as part of the theory in which we have to deal
  with probabilities of only a finite number of events.
  It appears that this restriction is applicable only
  to the mathematical theory
  and not to its informal motivation in the \emph{Verh\"altnis}.
  Unlimited repetition means that the number of trials is potentially infinite.
  Fixing the number of repetitions in advance would contradict
  this requirement,
  and even fixing a finite stopping time would contradict it
  (by the finiteness of the number of events considered at each step
  combined with K\H{o}nig's lemma).
  For the purposes of Principles~A and~B, however,
  the numbers of trials appear to be fixed;
  this is definitely true for B (one trial)
  and likely true for A ($n$ chosen in advance).
\end{remark}

The next part derives informally from Principle~A
the axioms of finitely additive probability introduced in the previous section, \cite[Sect.~I.1]{Kolmogoroff:1933}.
For example, if events $A_1$ and $A_2$ are disjoint,
the frequencies $m$, $m_1$, and $m_2$ of occurrences of $A_1\cup A_2$, $A_1$, and $A_2$,
respectively,
in $n$ repetitions will satisfy
\[
  \frac{m}{n}
  =
  \frac{m_1}{n}
  +
  \frac{m_2}{n},
\]
and so ``it appears appropriate to set''
$P(A_1\cup A_2)=P(A_1)+P(A_2)$.
This is the informal derivation of Axiom~V in \cite[Sect.~I.1]{Kolmogoroff:1933};
analogous arguments work for all other axioms.

Having both Principles~A and~B is unusual,
and Kolmogorov's predecessors often assumed only Principle~B,
sometimes referred to as Cournot's principle.
Once we assume Principle~B, Principle~A seems to follow by the law of large numbers,
a mathematical statement (details will be given below).
Such statements were made, e.g., on numerous occasions by Borel and L\'evy
\cite[Sect.~5.2.2]{Shafer/Vovk:2006SS}.
This will be further discussed in Sect.~\ref{sec:why-two}.

The final part of the \emph{Verh\"altnis} contains two remarks.
Remark~I says that if two assertions are both practically certain,
then their conjunction is also practically certain,
though with a little lower degree of certainty;
however, if the number of assertions is very large,
we cannot draw any conclusion whatsoever
about the practical certainty of their conjunction
from the practical certainty of each of them individually.
This is a version of the requirement that the event $A$ in Principle~B
should be prespecified,
and it prevents what philosophers call the lottery paradox
(see \cite{Wheeler:2007} for a review).
Remark~I appears uncontroversial.

Remark~II starts by observing that while the axioms of the previous section \cite[Sect.~I.1]{Kolmogoroff:1933} imply
that the impossible event $\emptyset$ has probability zero,
the converse implication is not true.
The rest of Remark~II claims much more \cite[Sect.~5.2.1]{Shafer/Vovk:2006SS}:
\begin{quote}
  By Principle~B, the event $A$'s having probability zero
  implies only that it is practically impossible
  that it will happen on a particular unrepeated realization of the conditions $\mathfrak{S}$.
  This by no means implies that the event $A$ will not appear
  in the course of a sufficiently long series of experiments.
  When $P(A)=0$ and $n$ is very large,
  we can only say, by Principle A,
  that the quotient $m/n$ will be very small---%
  it might, for example, be equal to $1/n$.
\end{quote}
In the second Russian edition the wording of Remark~II became less emphatic,
with the Russian equivalent of ``by no means''
(\begin{cyr}никоим образом\end{cyr}) removed.
It is interesting that Kolmogorov and Shiryaev also dropped the mention of the ratio $1/n$
as an example of non-zero $m/n$.

Remark~II appears to contradict the modern intuition of probability.
If a prespecified event $A$ pertaining to one trial has zero probability,
observing it in a sequence of trials, finite or infinite,
should also have zero probability, no matter whether the trials are independent or not.
This will be further discussed in Sect.~\ref{sec:Remark-II}.

\section{Why two principles?}
\label{sec:why-two}

Can we really derive Principle~A from Principle~B and a mathematical statement?
Doing so requires applying mathematical probability theory
to a combination of $n$ experiments rather than one experiment.
Strictly speaking, Kolmogorov's conditions $\mathfrak{S}$
are for one experiment, not for a sequence of experiments.
Are the experiments in Principle~A combinable
in such a way that Kolmogorov's empirical picture is applicable
to the combined experiment?
(See, e.g., \cite[Sect.~5.2.2]{Shafer/Vovk:2006SS}.)

In our current context,
there are two basic kinds of combinability:
under A-combinability,
we are allowed to apply Principle~A to the combination,
and under B-combinability,
we are allowed to apply Principle~B.
We can also talk about full combinability,
where the combination is subject to both principles.

B-combinability looks to me the simplest and most natural kind.
This is what we need for the derivation of Principle~A from Principle~B
under the additional assumption that the combined experiment
is described by the probability measure $P^n$,
where $P$ is the probability measure
(perhaps only finitely additive at this point in Kolmogorov's exposition)
describing one trial.
Kolmogorov's proof of the law of large numbers
in the \emph{Grundbegriffe}
\cite[Sect.~VI.3, the part present in all editions]{Kolmogoroff:1933}
implies that the relative frequency of occurrences of $A$ in $n$ trials
deviates from $P(A)$ by more than a given constant $\epsilon>0$
with $P^n$ probability at most
\begin{equation}\label{eq:law}
  \frac{P(A)(1-P(A))}{n\epsilon^2}
  \le
  \frac{1}{4n\epsilon^2}.
\end{equation}
For small $\epsilon$ and large $n\epsilon^2$,
we can derive Principle~A from Principle~B.

\begin{remark}
  A-combinability requires repeating experiments
  that already consist of repeated experiments.
  This might look complicated but appear necessary
  under a frequentist interpretation of the law of large numbers,
  e.g., in the form of \eqref{eq:law}.
  In fact, von Mises takes an opposite approach
  (see, e.g., \cite[p.~114 of the English translation]{vonMises:1928}).
  Instead of repeating infinitely many times
  an experiment repeated finitely many times,
  von Mises starts from the combination,
  a single infinite sequence,
  which is then cut into an infinite sequence of finite sequences of length $n$
  (using his operations of selection and combination).
  We can apply the same approach in the case of repeating finitely many times
  an experiment repeated finitely many times;
  this would be more consistent with Kolmogorov's philosophy and even easier.
\end{remark}

There are several possible explanations
for Kolmogorov having both Principle~A and Principle~B
in his book.
The most basic explanation is that the trials in Kolmogorov's picture
are not even B-combinable.
However, this answer seems to limit severely the applicability of probability theory,
which runs against typical modern intuitions of probability.

Another explanation is that, even if Kolmogorov's experiments are B-combinable,
the probability measure (at this point of the book finitely additive)
does not have to be a product measure;
the component experiments do not have to be independent
or, more generally, do not have to be IID.
This was Shiryaev's implicit suggestion
in \cite[bottom of p.~324]{Shiryaev:2019-local},
and it was discussed more explicitly in \cite[Sect.~5.2]{Shafer/Vovk:2006SS}.

The third explanation is that the accuracy of zero probability diminishes
as we increase the number of trials.
As mentioned in \cite[Sect.~2.2.3]{Shafer/Vovk:2006SS},
there was no gulf between zero probability and merely small probability
in the writings of Borel and his French colleagues;
for them, the vanishingly small was merely an idealization of the very small.
However, extending this to Kolmogorov seems
to run counter to everything we know about his rigour.

To me, the most convincing explanation is that,
even if we regard Principle~A as derivable from Principle~B,
we still need the former to give an operational interpretation of probability.
Principle~A gives us a way of measuring probabilities in principle
(ignoring the unknown accuracy of such measurements)
and allows us to motivate Kolmogorov's axioms
(see also \cite{Bartlett:1949} and \cite[Sect.~5.2.2]{Shafer/Vovk:2006SS}).
In principle,
from the modern philosophical point of view
there is no need to have an operational interpretation
of theoretical notions \cite{Chang:2021},
but it may have been important for Kolmogorov,
with the extra bonus of justifying the first five axioms.
The original title of the \emph{Verh\"altnis}
was ``Empirische Begr\"undung der Axiome''
(Kolmogorov's letter of 26 October 1932 in \cite{Shiryaev:2003b};
essentially this is still the name of a subsection,
``Empirische Deduktion der Axiome''),
which makes the presence of Principle~A especially desirable.
Perhaps having Principle~A separately was also a pedagogical device
employed by Kolmogorov in teaching his Dnipro students.

\section{A finitely additive measure consistent with Remark~II}
\label{sec:Remark-II}

The assumption of B-combinability appears as natural
in the context of Remark~II as in the context of deriving Principle~A from Principle~B
in the previous section.
On the one hand,
the number of trials in Remark~II is potentially infinite,
as discussed in Remark~\ref{rem:infinity},
on the other hand, we do not need any assumptions
on the dependence structure of the sequence of trials
(remember that the notion of independence,
the topic of \cite[Sect.~I.5]{Kolmogoroff:1933},
is not even introduced yet when discussing Principles~A and~B
in the \emph{Grundbegriffe}).
If the potentially infinite sequence of experiments
in Remark~II is not B-combinable,
the resulting theory appears very restrictive;
why shouldn't it be applicable to a sequence of experiments?

If the complex of conditions $\mathfrak{S}$ is repeated
a given finite number $n$ of times,
Remark~II is not applicable,
since the five axioms introduced in Sect.~I.1 imply
\[
  P(A_1\cup\dots\cup A_n)
  \le
  \sum_{i=1}^n
  P(A_i) = 0
\]
regardless of whether or not the trials are dependent,
where $A_i$ is the event (in the combined experiment)
of observing the event $A$ in the $i$th trial.
The conclusion is also true when we fix in advance a finite stopping time,
as discussed in Remark~\ref{rem:infinity}.
Even if the sequence of trials is infinitely long,
Remark~II still fails under countable additivity.
Perhaps this demonstrates Kolmogorov's contention in Chapter II
that ``it is almost impossible to elucidate''
the empirical meaning of countable additivity.

Let us check that Remark~II is compatible with the first five axioms,
those that do have a clear empirical meaning according to Kolmogorov.
First we replace a ``potentially infinite'' sequence of trials
with an infinite sequence;
this seems inevitable in mathematical considerations,
and potential infinity will enter via our interest exclusively in events
being settled after finitely many trials.
We need infinite sequences only because we do not have
an everywhere finite stopping time chosen in advance.

Let us encode the appearance of the event $A$ of probability zero as 1
and its failure to appear as 0.
This makes our sample space $\Omega:=\{0,1\}^{\infty}$;
define $X_n(\omega)$, $\omega\in\Omega$, as the $n$th element of $\omega$.
Let $\N:=\{1,2,\dots\}$.
The following simple corollary of known results is a possible
mathematical counterpart of Remark~II with the individual trials
regarded as B-combinable.

\begin{proposition}\label{prop:existence}
  There exists a finitely additive probability measure $P$
  defined on all subsets of $\Omega$
  such that $P(X_n=1)=0$ for each $n\in\N$
  whereas $P(\exists n\in\N:X_n=1)=1$.
\end{proposition}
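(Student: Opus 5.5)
The plan is to build $P$ as a limit along a free ultrafilter of ordinary (countably additive) probability measures that put all their mass on sequences whose first $1$ appears later and later. For each $k\in\N$, let $\delta_k$ be the Dirac measure at the point $\omega^{(k)}\in\Omega$ with $X_k(\omega^{(k)})=1$ and $X_n(\omega^{(k)})=0$ for $n\ne k$. Fix a free ultrafilter $\UUU$ on $\N$, which exists by the ultrafilter lemma, i.e., a consequence of Zorn's lemma. For every $E\subseteq\Omega$ define
\[
  P(E) := \lim_{k\to\UUU} \delta_k(E),
\]
that is, $P(E)=1$ if $\{k\st\omega^{(k)}\in E\}\in\UUU$ and $P(E)=0$ otherwise. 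The limit along an ultrafilter of a bounded real sequence always exists and is unique, so $P$ is well defined on \emph{all} subsets of $\Omega$.

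Next I check the axioms. Nonnegativity and $P(\Omega)=1$ are immediate. For finite additivity, take disjoint $E,F$. The sets $\{k\st\omega^{(k)}\in E\}$ and $\{k\st\omega^{(k)}\in F\}$ are disjoint, so at most one of them lies in $\UUU$. Their union lies in $\UUU$ if and only if one of them does, because $\UUU$ is an ultrafilter. Hence $P(E\cup F)=P(E)+P(F)$. Equivalently, one can note that $\UUU$-limits are linear and that $\delta_k(E\cup F)=\delta_k(E)+\delta_k(F)$ for each $k$.

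Finally I compute the two required values. For fixed $n$, the set $\{k\st X_n(\omega^{(k)})=1\}=\{n\}$ is finite, and a free ultrafilter contains no finite set, so $P(X_n=1)=0$. On the other hand, every $\omega^{(k)}$ lies in $\{\exists n\in\N\st X_n=1\}$, so the corresponding index set is all of $\N\in\UUU$, and $P(\exists n\in\N\st X_n=1)=1$.

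The only nonconstructive step, and the main point needing justification, is the existence of a free ultrafilter. I would simply cite it as the known result the statement alludes to; it cannot be avoided, since any such $P$ fails countable additivity. Alternatively, one could obtain $P$ via Hahn--Banach as an extension of $\liminf_k \delta_k(\cdot)$, but the ultrafilter route is cleaner.
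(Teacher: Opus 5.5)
Your proof is correct and is exactly the explicit construction in the paper's appendix. There, too, $P(E)=1$ if and only if $\{k\st e_k\in E\}\in\UUU$, with $P$ concentrated on the sequences $e_k$ having a single $1$. The paper just calls the required properties obvious, and you supply the routine ultrafilter checks. The paper's headline proof sketch instead gets existence from the Kadane--O'Hagan extension criterion. The key point there is that no finite union of the null sets $\{X_n=1\}$ and $\{(0,0,\dots)\}$ covers $\Omega$, since $e_i$ escapes it for large $i$. The paper presents your ultrafilter construction as the more intuitive and explicit alternative.
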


\begin{proof}[Proof sketch]
  This follows immediately
  from Theorem 1 in \cite{Kadane/OHagan:1995}
  (stated in the appendix as Theorem~\ref{thm:KO}),
  which gives a simple necessary and sufficient condition for the existence
  of an extension of a function defined on some subsets of $\Omega$
  to all subsets of $\Omega$.
\end{proof}

In the appendix we will see what such a probability measure $P$ may look like.
It is impossible to really construct $P$,
since its existence depends on the axiom of choice
(or something similarly nonconstructive),
but we can easily construct it when given a simple mathematical object
(a free ultrafilter on $\N$).

When observing the trials sequentially,
we can stop when the $A$ of Remark~II happens
(by Principle~B this is practically certain under $P$),
and then the ratio $m/n$ will be $1/n$.
While Axiom~VI, introduced later, in Sect.~II.1,
makes the appearance of $A$ practically impossible,
without it we can make the appearance of $A$ inevitable.
Let us state formally this and other properties of $P$.

\begin{proposition}\label{prop:tau}
  Let $P$ be any finitely additive probability measure as in Proposition~\ref{prop:existence}
  and let $\tau(\omega):=\min\{n\st\omega_n=1\}$ (with $\min\emptyset:=\infty$).
  Then:
  \begin{enumerate}[(a)]
  \item
    $P$ agrees with the point mass at $(0,0,\dots)$
    on every event determined by finitely many trials;
    in particular, $X_1,X_2,\dots$ are independent
    (in the finitely additive sense) under $P$.
  \item
    $P(\tau>n)=1$ for every $n\in\N$, whereas $P(\tau<\infty)=1$.
  \end{enumerate}
\end{proposition}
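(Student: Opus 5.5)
The plan is to get both parts from finite additivity and monotonicity, applied to the two hypotheses of Proposition~\ref{prop:existence}: $P(X_n=1)=0$ for every $n$, and $P(\exists n:X_n=1)=1$. No countable additivity is needed.

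\textbf{Part (a).} Fix $n\in\N$. Let $C_n:=\{\omega\st\omega_1=\dots=\omega_n=0\}$ be the cylinder of $(0,0,\dots)$ of length $n$. Its complement is $\bigcup_{i=1}^n\{X_i=1\}$. By finite subadditivity, which follows from the first five axioms as in the display of Sect.~\ref{sec:Remark-II}, this complement has probability at most $\sum_{i=1}^n P(X_i=1)=0$, so $P(C_n)=1$.

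Now let $E$ be an event determined by the first $n$ trials, i.e.\ $E=\{\omega\st(\omega_1,\dots,\omega_n)\in B\}$ for some $B\subseteq\{0,1\}^n$.
\begin{itemize}
\item If $(0,\dots,0)\in B$, then $E\supseteq C_n$, so $P(E)=1$ by monotonicity.
\item Otherwise $E\subseteq\Omega\setminus C_n$, so $P(E)=0$.
\end{itemize}
In both cases $P(E)=\delta_{(0,0,\dots)}(E)$, which proves the first claim.

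Independence in the finitely additive sense means $P(X_{i_1}=a_1,\dots,X_{i_k}=a_k)=\prod_j P(X_{i_j}=a_j)$ for all finite index sets and values. These are finitely determined events, so both sides equal their values under the point mass. The point mass is a product of point masses, hence the identity holds.

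\textbf{Part (b).} The identity $\{\tau>n\}=C_n$ gives $P(\tau>n)=1$ from part (a). Also $\{\tau<\infty\}=\{\exists n\in\N\st X_n=1\}$, which has probability $1$ by hypothesis.

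The only point requiring care is that the argument never uses continuity. Part (b) exhibits exactly the failure of Axiom~VI: the events $\{\tau>n\}$ decrease to $\{\tau=\infty\}$, yet each has probability $1$ while the limit has probability $0$. I would add a sentence noting this, since it is the conceptual content of the proposition.
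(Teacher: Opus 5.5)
Your proof is correct and takes essentially the same route as the paper's: finite subadditivity gives the all-zeros cylinder probability $1$; every finitely determined event either contains this cylinder or lies in its complement; and (b) follows from (a) together with the hypothesis. Working with the first $n$ trials instead of an arbitrary finite set $F$ of trials loses nothing, since $F\subseteq\{1,\dots,\max F\}$.
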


The proof is spelled out in detail in the appendix.
Part (a) says that Remark II is not being bought at the price of exotic dependence:
the trials may be taken independent,
and every prediction about a fixed finite horizon is exactly the one
the classical model makes.
Part (b) gives Kolmogorov's $1/n$:
at the stopping time $\tau$,
which is $P$-almost surely finite but larger than every fixed $n$
with probability one,
the observed frequency is exactly $1/\tau$.

\section{Conclusion}

This note reflects my attempts to reconcile Kolmogorov's thinking
with modern views,
and it pays particular attention to the aspects of the \emph{Verh\"altnis}
that have looked puzzling to me.
For a broader perspective, the reader may consult
von Plato \cite{vonPlato:1998} and Shafer and Vovk \cite{Shafer/Vovk:2006SS}.

In his preface to the second Russian edition of the \emph{Grundbegriffe}
Kolmogorov points out a fundamental limitation of the \emph{Verh\"altnis}
and draws the reader's attention to the new approach
to the relation of probability theory to reality
sketched in his papers \cite{Kolmogorov:1965} and \cite{Kolmogorov:1968}.
But this is a different story.

\subsection*{Acknowledgments}

Many thanks to Glenn Shafer for numerous discussions of the \emph{Grundbegriffe}
including \cite[Sect.~I.2, including Remark~II]{Kolmogoroff:1933}
and to Sasha Shen for a more recent discussion of \cite[Sect.~I.2, Remark~II]{Kolmogoroff:1933}.
In this note I used Shen's comparison of the wordings of Remark~II in the \emph{Verh\"altnis}
in different Russian editions.

I acknowledge the use of Claude Opus 5 in exploring proof ideas,
which I reviewed carefully,
and for checking the note.
I take full responsibility for this note's claims and statements,
including mathematical statements and their proofs.

\appendix
\section{Some details}

In this appendix I will give a more intuitive argument
for Proposition~\ref{prop:existence}
that is also more explicit
(it is constructive modulo a given free ultrafilter $\UUU$ on $\N$,
although such a $\UUU$ cannot be constructed itself,
as mentioned earlier).
In order to make the exposition self-contained,
its second section states Theorem~1 of \cite{Kadane/OHagan:1995}
and gives the details of the derivation of Proposition~\ref{prop:existence}.
Finally, a detailed proof of Proposition~\ref{prop:tau} is also provided.

\subsection{An explicit $P$ in Proposition~\ref{prop:existence}}

The notion of a \emph{free ultrafilter} (on $\N$) is very intuitive,
but let us start from a formal definition.
We say that a family $\UUU$ of subsets of $\N$ is a \emph{filter}
if $\emptyset\notin\UUU$, $\N\in\UUU$,
a superset of any element of $\UUU$ is in $\UUU$,
and the intersection of any two elements in $\UUU$ is in $\UUU$.
A filter $\UUU$ is an \emph{ultrafilter}
if for every $A\subseteq\N$, either $A$ or its complement is in $\UUU$.
An ultrafilter $\UUU$ is \emph{free} if it does not contain any singleton.

Fix a free ultrafilter $\UUU$ on $\N:=\{1,2,\dots\}$.
Let us say that a set $A\subseteq\N$ is \emph{big} if $A\in\UUU$;
otherwise, it is \emph{small}.
The definition of a free ultrafilter requires that every singleton be a small set,
subsets of small sets be small,
finite unions of small sets be small,
and the complements of small sets be big and vice versa.

Let $e_k$, $k\in\N$, be the sequence in $\Omega=\{0,1\}^{\infty}$
that has a 1 only at the $k$th position.
Define $P$ to be the finitely additive measure concentrated on $\{e_k\st k\in\N\}$
such that, for any $E\subseteq\Omega$,
\begin{equation}\label{eq:P}
  P(E)
  :=
  \begin{cases}
    1 & \text{if $\{k\st e_k\in E\}\in\UUU$}\\
    0 & \text{otherwise}.
  \end{cases}
\end{equation}
In other words, $P(E)=1$ if $E$ contains many $e_k$
(in the sense of $\{k\st e_k\in E\}$ being big)
and $P(E)=0$ if $E$ contains few $e_k$
(in the sense of $\{k\st e_k\in E\}$ being small).
The intuition behind $P$ is that it expresses the belief
that the sequence of outcomes will be one of the $e_k$
(i.e., will contain only one 1)
and that we believe that $k\notin A$ for a prespecified small $A\subseteq\N$
while we believe that $k\in A$ for a prespecified big $A\subseteq\N$.
It is obvious that $P$ satisfies the conditions of Proposition~\ref{prop:existence}.

The finitely additive probability measure~\eqref{eq:P} is not very interesting
in that it takes only two values, $0$ or $1$.
To make its range $[0,1]$, we can modify \eqref{eq:P}
using the notion of the ``standard part'' of a sequence of real numbers;
before defining it formally, let me explain how it can be used in constructing $P$.
The standard part of a sequence is its generalized limit;
it always exists in $\overline{\R}$ and coincides with the limit when it exists.
We define $P(E)$ as the standard part of the sequence $(p_1,p_2,\dots)$, where
\begin{equation}\label{eq:p}
  p_n
  :=
  \left|
    \{k\le n\st e_k\in E\}
  \right|
  /n
\end{equation}
is the relative frequency of $e_k\in E$ over the first $n$ steps.
We can say that $P(E)$ is the limiting relative frequency of $e_k\in E$, $k\in\N$.

Let us now define $P$ formally.
A \emph{hyperreal (number)} is defined to be a sequence of real numbers,
where hyperreal numbers $(a_1,a_2,\dots)$ and $(b_1,b_2,\dots)$
are regarded as equivalent if $a_k=b_k$ for many $k$
(formally, a hyperreal number can be also defined as an equivalence class).
The linear order on the hyperreals is defined in the same way:
$(a_1,a_2,\dots)\le(b_1,b_2,\dots)$ means that
$a_k\le b_k$ for many $k$.
Real numbers can be embedded into the hyperreals as
$a\in\R\mapsto(a,a,\dots)$.
It is clear that for any hyperreal $p=(p_1,p_2,\dots)$
with all $p_k\in[0,1]$ (such as \eqref{eq:p})
there exists a unique real number $p'\in[0,1]$
such that, for any real $\epsilon>0$, $\left|p-p'\right|<\epsilon$
(where the operations $\left|\cdot\right|$ and $-$ are defined
for hyperreals term-wise).
Such $p'$ is known as the \emph{standard part} of $p$.
This completes the definition of $P$;
checking that it satisfies the conditions of Proposition~\ref{prop:existence}
is straightforward.

An excellent source about ultrafilters
(and Banach limits, which can be used in place of taking the standard part)
is \cite{Komjath/Totik:2008}.
For a short self-contained description of ultrafilters and hyperreals,
see the first subsection of Sect.~11.5 in the book \cite{Shafer/Vovk:2001},
which applies them in the foundations of probability.
For a detailed exposition of non-standard analysis (including the hyperreals)
based on ultrafilters,
see \cite{Goldblatt:1998}.

\subsection{Details of the deduction of Proposition~\ref{prop:existence}
  from a general criterion}

Proposition~\ref{prop:existence} follows from the following
necessary and sufficient condition for the possibility of extending a given function
on subsets of $\Omega$ to all subsets.

\begin{theorem}[\cite{Kadane/OHagan:1995}, Theorem~1]\label{thm:KO}
  Let $\mathcal{C}$ be any collection of subsets of a set $\Omega$
  with $\Omega\in\mathcal{C}$,
  and let $P$ be a nonnegative real-valued function on $\mathcal{C}$ with $P(\Omega)=1$.
  Then $P$ can be extended to a finitely additive probability measure on all subsets of $\Omega$
  if and only if, for all finite lists $C_{1},\dots,C_{c}$ and $C'_{1},\dots,C'_{c'}$
  of members of~$\mathcal{C}$,
  \begin{equation}\label{eq:thm}
    \left(
      \sum_{i=1}^{c} 1_{C_{i}} \le \sum_{j=1}^{c'} 1_{C'_{j}}
    \right)
    \Longrightarrow
    \left(
      \sum_{i=1}^{c} P(C_{i}) \le \sum_{j=1}^{c'} P(C'_{j})
    \right),
  \end{equation}
  where $1_{A}$ stands for the indicator function of $A$.
\end{theorem}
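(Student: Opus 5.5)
The necessity half of Theorem~\ref{thm:KO} is immediate. I will prove sufficiency by the Hahn--Banach theorem applied to a sublinear functional that is built from the data $(\mathcal{C},P)$ through integer combinations of indicators.

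\emph{Necessity.} Suppose $P$ extends to a finitely additive probability $\bar P$ on all subsets of $\Omega$. Integration of simple functions against $\bar P$ is then linear and monotone. Integrating both sides of $\sum_i 1_{C_i}\le\sum_j 1_{C'_j}$ gives \eqref{eq:thm}.

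\emph{Sufficiency: the functional $p$.} For a bounded $f\colon\Omega\to\R$ I would set
\[
  p(f):=\inf\Bigl\{\tfrac1n\Bigl(\sum_{j} P(C'_j)-\sum_i P(C_i)\Bigr)\st n\in\N,\ n f\le \sum_j 1_{C'_j}-\sum_i 1_{C_i}\Bigr\},
\]
where the lists range over finite lists of members of $\mathcal{C}$. Repetitions are allowed, and $\Omega\in\mathcal{C}$ is always available.

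\emph{Basic properties of $p$.} These follow in order.
\begin{enumerate}[(i)]
\item $p(f)<\infty$. Take $n=1$ and enough copies of $\Omega$ on the right, which works because $f$ is bounded.
\item $p(f+g)\le p(f)+p(g)$. Put both representations over the common denominator $nm$ and concatenate the lists.
\item $p(qf)=q\,p(f)$ for positive rationals $q$. Rescale $n$ and replicate the lists.
\item $p(f)\le p(g)$ whenever $f\le g$.
\item $p(f+k/n)\le p(f)+k/n$. Add $k$ copies of $\Omega$ after multiplying by $n$.
\item $p(0)\ge 0$, hence $p>-\infty$ everywhere and $p(f)+p(-f)\ge p(0)\ge 0$. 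Indeed, if $0\le\sum_j 1_{C'_j}-\sum_i 1_{C_i}$ then $\sum_i 1_{C_i}\le\sum_j 1_{C'_j}$, and \eqref{eq:thm} gives $\sum_i P(C_i)\le\sum_j P(C'_j)$. This is the only place the hypothesis is used.
\end{enumerate}
Properties (iv) and (v) make $p$ $1$-Lipschitz in the sup norm. Combined with (iii), this upgrades rational homogeneity to $p(\lambda f)=\lambda p(f)$ for all real $\lambda>0$. So $p$ is a real-valued sublinear functional on the space of bounded functions.

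\emph{Extracting the measure.} Hahn--Banach gives a linear $\Lambda$ with $\Lambda\le p$; extend from $\{0\}$ with $\Lambda(0)=0$. Define $\bar P(A):=\Lambda(1_A)$. Then:
\begin{itemize}
\item Finite additivity comes from linearity.
\item Nonnegativity holds because $-\Lambda(1_A)=\Lambda(-1_A)\le p(-1_A)\le p(0)$. Representing $0$ by the empty lists shows $p(0)\le0$, so $p(0)=0$.
\item For $C\in\mathcal{C}$, the single-set representations give $\Lambda(1_C)\le p(1_C)\le P(C)$ and $\Lambda(-1_C)\le p(-1_C)\le -P(C)$. Hence $\bar P(C)=P(C)$.
\item In particular $\bar P(\Omega)=P(\Omega)=1$.
\end{itemize}

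\emph{Main obstacle.} The delicate step is making $p$ genuinely sublinear over the reals while the hypothesis \eqref{eq:thm} only speaks about integer (multi)sets of indicators. My plan handles this with the denominators $n$ in the definition, together with the Lipschitz continuity from (iv)--(v). I would check carefully that (ii) survives taking infima and that (vi) really yields finiteness. An alternative that avoids real homogeneity issues would be to apply Hahn--Banach over the field $\mathbb{Q}$ and then use monotonicity to extend $\Lambda$.

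Proposition~\ref{prop:existence} then follows from this criterion with $\mathcal{C}=\{\Omega\}\cup\{\{X_n=1\}\st n\in\N\}\cup\{\{\exists n\st X_n=1\}\}$.
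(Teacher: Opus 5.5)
Your proof is correct and complete. The two points you flag in your last paragraph go through. Every value of $p$ is $<+\infty$ by (i), so the infima in (ii) can be added. Then (i), (ii) and (vi) give $p(f)\ge p(0)-p(-f)>-\infty$. As you say, the hypothesis enters only through $p(0)\ge 0$; you never use nonnegativity of $P$, which in fact follows from \eqref{eq:thm} applied to $1_\Omega\le 1_\Omega+1_C$.

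The paper does not prove Theorem~\ref{thm:KO} at all. It quotes the result from \cite{Kadane/OHagan:1995} and notes that the needed ``if'' direction appears as \cite[Theorem~3.2.10]{Rao:1983}. It then verifies only \eqref{eq:thm} for its particular $\mathcal{C}$, by evaluating at $e_i$ for large $i$. For Proposition~\ref{prop:existence} it also sidesteps the theorem altogether with the explicit ultrafilter measure \eqref{eq:P}.

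What your route buys is a self-contained proof. It makes explicit the one step a reader would otherwise take on trust: passing from integer combinations of indicators to a real sublinear functional, via the denominators $n$ and the sup-norm Lipschitz bound. It also shows that the nonconstructive ingredient is Hahn--Banach, matching the paper's remark that such $P$ depends on choice. The citation buys brevity.

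Your closing choice of $\mathcal{C}$ assigns $1$ to $\{\exists n\st X_n=1\}$ rather than $0$ to the paper's $Z$. The two choices are equivalent, but you left \eqref{eq:thm} unchecked for yours. The check is the same evaluation at $e_k$ for $k$ beyond every index occurring in the lists, and you should write it out.
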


The direction that we need for Proposition~\ref{prop:existence}
(namely, the ``if'' direction)
is actually stated already in \cite[Theorem~3.2.10]{Rao:1983}.

Let us now deduce Proposition~\ref{prop:existence} from Theorem~\ref{thm:KO}.
Let $\CCC$ consist of the sets $\Omega$, $Z:=\{(0,0,\dots)\}$,
and $\{X_n=1\}$, $n\in\N$.
It suffices to check \eqref{eq:thm},
where $P(\Omega):=1$, $P(Z):=0$, and as before, $P(\{X_n=1\})=0$ for all $n\in\N$.
Arguing indirectly, suppose \eqref{eq:thm} is violated for some lists.
Without loss of generality, we can assume that
all $C'_j$ are either $Z$ or of the form $\{X_n=1\}$
and that $c=1$ and $C_1=\Omega$.
We arrive at a contradiction by noticing
that the antecedent of \eqref{eq:thm} is false at $e_i$
for sufficiently large $i$.

\subsection{Proof of Proposition~\ref{prop:tau}}

(a)
Let $E$ be determined by the trials in a finite set $F\subseteq\N$,
and let $C:=\{\omega\st\omega_n=0 \text{ for all } n\in F\}$.
The complement of $C$ is $\bigcup_{n\in F}\{X_n=1\}$,
a finite union of events of probability zero,
so $P(C)=1$.
If $(0,0,\dots)\in E$ then $C\subseteq E$ and $P(E)=1$;
otherwise $C\subseteq E^{\textrm{c}}$ and $P(E)=0$.
Independence follows since every finite-dimensional cylinder
is such an $E$, and its probability, 1 or 0 according as
it requires no 1 or some 1, agrees with the product of the marginals.

(b)
$\{\tau>n\}=\{X_1=\dots=X_n=0\}$ has probability 1 by~(a),
and $\{\tau<\infty\}=\{\exists n\in\N\st X_n=1\}$
has probability 1 by assumption.

\begin{thebibliography}{10}
\bibitem{Bartlett:1949}
Maurice~S. Bartlett.
\newblock Probability in logic, mathematics and science.
\newblock {\em Dialectica}, 3:104--113, 1949.

\bibitem{Rao:1983}
K.~P.~S. Bhaskara~Rao and M.~Bhaskara~Rao.
\newblock {\em Theory of Charges}.
\newblock Academic Press, New York, 1983.

\bibitem{Chang:2021}
Hasok Chang.
\newblock Operationalism.
\newblock In Edward~N. Zalta, editor, {\em Stanford Encyclopedia of
  Philosophy}. Metaphysics Research Lab, Stanford University, {Autumn} 2021
  edition, 2021.

\bibitem{Goldblatt:1998}
Robert Goldblatt.
\newblock {\em Lectures on the Hyperreals: An Introduction to Nonstandard
  Analysis}.
\newblock Springer, New York, 1998.

\bibitem{Hajek:2023}
Alan H\'ajek.
\newblock Interpretations of probability.
\newblock In Edward~N. Zalta and Uri Nodelman, editors, {\em Stanford
  Encyclopedia of Philosophy}. Metaphysics Research Lab, Stanford University,
  {Winter} 2023 edition, 2023.

\bibitem{Kadane/OHagan:1995}
Joseph~B. Kadane and Anthony O'Hagan.
\newblock Using finitely additive probability: uniform distributions on the
  natural numbers.
\newblock {\em Journal of the American Statistical Association}, 90:626--631,
  1995.

\bibitem{Kolmogoroff:1933}
Andrei~N. Kolmogorov.
\newblock {\em Grundbegriffe der Wahr\-schein\-lich\-keits\-rechnung}.
\newblock Springer, Berlin, 1933.
\newblock Russian translation: \begin{cyr}Основные понятия
  теории вероятностей\end{cyr}. \begin{cyr}ОНТИ\end{cyr},
  Moscow, 1936. {English} translation: \emph{Foundations of the Theory of
  Probability}. Chelsea, New York, 1950. Second English edition: Chelsea, New
  York, 1956. Second Russian edition: \begin{cyr}Наука\end{cyr}, Moscow,
  1974. Third Russian edition: \begin{cyr}ФАЗИС\end{cyr}, Moscow, 1998.

\bibitem{Kolmogorov:1965}
Andrei~N. Kolmogorov.
\newblock Three approaches to the quantitative definition of information.
\newblock {\em Problems of Information Transmission}, 1:1--7, 1965.
\newblock {Russian} original: \begin{cyr}Три под\-хо\-да к
  опре\-де\-ле\-нию понятия ``количество
  ин\-фор\-ма\-ции''\end{cyr}.

\bibitem{Kolmogorov:1968}
Andrei~N. Kolmogorov.
\newblock Logical basis for information theory and probability theory.
\newblock {\em IEEE Transactions on Information Theory}, IT-14:662--664, 1968.
\newblock {Russian} version: \begin{cyr}К логическим основам
  теории информации и теории
  ве\-ро\-ят\-нос\-тей\end{cyr}, published in
  \begin{cyr}Проблемы передачи информации\end{cyr},
  1969.

\bibitem{Komjath/Totik:2008}
Péter Komjáth and Vilmos Totik.
\newblock Ultrafilters.
\newblock {\em American Mathematical Monthly}, 115:33--44, 2008.

\bibitem{Nikolsky:1983}
Sergei~M. Nikol'skii.
\newblock Aleksandrov and {Kolmogorov} in {Dnepropetrovsk}.
\newblock {\em Russian Mathematical Surveys}, 38(4):41--55, 1983.

\bibitem{Nikolsky:2000}
Sergei~M. Nikol'skii.
\newblock In memory of {A.~N.~Kolmogorov}.
\newblock In {\em Kolmogorov in Perspective}, pages 109--116. American
  Mathematical Society, 2000.

\bibitem{Shafer/Vovk:2001}
Glenn Shafer and Vladimir Vovk.
\newblock {\em Probability and Finance: It's Only a Game!}
\newblock Wiley, New York, 2001.

\bibitem{Shafer/Vovk:2006SS}
Glenn Shafer and Vladimir Vovk.
\newblock The sources of {Kolmogorov's} \emph{Grundbegriffe}.
\newblock {\em Statistical Science}, 21:70--98, 2006.

\bibitem{Shiryaev:2003a}
Albert~N. Shiryaev, editor.
\newblock {\em \noop{a}Truth is a blessing: Biobibliography (in Russian,
  \begin{cyr}Истина-благо:
  Биобиблиография\end{cyr})}, volume~1 of {\em
  \begin{cyr}Колмогоров: юбилейное издание в
  трех книгах\end{cyr}}.
\newblock \begin{cyr}Физ\-мат\-лит\end{cyr}, Moscow, 2003\noop{1}.

\bibitem{Shiryaev:2003b}
Albert~N. Shiryaev, editor.
\newblock {\em \noop{b}The braid of these running lines\dots: Selected passages
  from the correspondence between A.~N.~Kolmogorov and P.~S.~Aleksandrov (in
  Russian, \begin{cyr}Этих строк бегущих тесьма\dots:
  Избранные места из пе\-ре\-пис\-ки
  А.~Н.~Колмогорова и
  П.~С.~Александрова\end{cyr})}, volume~2 of {\em
  \begin{cyr}Кол\-мо\-го\-ров: юбилейное издание в
  трех книгах\end{cyr}}.
\newblock \begin{cyr}Физ\-мат\-лит\end{cyr}, Moscow, 2003\noop{2}.

\bibitem{Shiryaev:2019-local}
Albert~N. Shiryaev.
\newblock Development of mathematical theory of probability: Historical review.
\newblock In {\em Albert N. Shiryaev, Probability-2}, pages 313--331. Springer,
  New York, 2019.
\newblock A version of this essay was published as
  ``\begin{cyr}Математическая теория
  ве\-ро\-ят\-нос\-тей.\ Очерк истории
  становления\end{cyr}'' in the third Russian edition of
  Kolmogorov's \emph{Grundbegriffe} \cite{Kolmogoroff:1933}.

\bibitem{vonMises:1928}
Richard von Mises.
\newblock {\em Wahrscheinlichkeit, Statistik und Wahrheit}.
\newblock Springer, Berlin, 1928.
\newblock {English} translation: \emph{Probability, Statistics and Truth}.
  William Hodge, London, 1939.

\bibitem{vonMises:1931}
Richard von Mises.
\newblock {\em Wahrscheinlichkeitsrechnung und ihre Anwendung in der Statistik
  und theoretischen Physik (in German, Probabilities and their Applications in
  Statistics and Theoretical Physics)}, volume~1 of {\em Vorlesungen aus dem
  {Gebiete} der angewandten Mathematik (Lectures on Applied Mathematics)}.
\newblock Franz Deuticke, Leipzig and Vienna, 1931.
\newblock This book was the only volume in the series \emph{Lectures on Applied
  Mathematics}.

\bibitem{vonPlato:1998}
Jan von Plato.
\newblock {\em Creating Modern Probability: Its Mathematics, Physics and
  Philosophy in Historical Perspective}.
\newblock Cambridge University Press, Cambridge, 1994.

\bibitem{Wheeler:2007}
Gregory Wheeler.
\newblock A review of the lottery paradox.
\newblock In Gregory Wheeler and William Harper, editors, {\em Probability and
  Inference: Essays in Honour of Henry E. Kyburg Jr.}, pages 1--31. College
  Publications, London, 2007.
\end{thebibliography}
\end{document}